\documentclass[reqno]{amsart}
\usepackage{amsrefs}


\usepackage{amsfonts}
\usepackage{amsmath}
\usepackage[arrow,matrix,tips, curve]{xy}
\usepackage{amssymb}
\usepackage{amsthm}
\usepackage{latexsym}
\usepackage{url}
\usepackage{verbatim}

\theoremstyle{plain}
\newtheorem{theorem}{Theorem}
\newtheorem{proposition}[theorem]{Proposition}
\newtheorem{lemma}[theorem]{Lemma}
\newtheorem{corollary}[theorem]{Corollary}
\newtheorem{conjecture}[theorem]{Conjecture}
\newtheorem{Hensel's Lemma}[theorem]{Hensel's Lemma}

\theoremstyle{definition}

\theoremstyle{remark}


\newcommand{\set}[1]{\left\{#1\right\}}

\newcommand{\Z}{\mathbb{Z}}
\newcommand{\Zp}{\mathbb{Z}_p}
\newcommand{\Qp}{\mathbb{Q}_p}
\newcommand{\oneunit}[1]{\left\langle#1\right\rangle}
\DeclareMathOperator{\ord}{ord}

\usepackage{etoolbox}
\makeatletter
\patchcmd{\@maketitle}
  {\ifx\@empty\@dedicatory}
  {\ifx\@empty\@date \else {\vskip3ex \centering\footnotesize\@date\par\vskip1ex}\fi
   \ifx\@empty\@dedicatory}
  {}{}
\patchcmd{\@adminfootnotes}
  {\ifx\@empty\@date\else \@footnotetext{\@setdate}\fi}
  {}{}{}
\makeatother

\begin{document}

\title[the discrete lambert map]{the discrete lambert map}

\author{Anne Waldo}

\date{July 29, 2014}

\address{Department of Mathematics and Statistics, 
Mount Holyoke College,
50 College Street, South Hadley,
  MA 01075, USA}

\email{waldo22a@mtholyoke.edu}

\author{Caiyun Zhu} 

\address{Department of Mathematics and Statistics, 
Mount Holyoke College,
50 College Street, South Hadley,
  MA 01075, USA}

\email{zhu22c@mtholyoke.edu}

\thanks{We would like to thank the Hutchcroft Fund of the Department of Mathematics and Statistics at Mount Holyoke for funding the summer research project in 2014. }

\subjclass{}
\keywords{}

\begin{abstract}
The goal of this paper is to analyze the discrete Lambert map $x \to xg^x \pmod {p^e}$ which is important for security and verification of the ElGamal digital signature scheme. We use $p$-adic methods ($p$-adic interpolation and Hensel's Lemma) to count the number of solutions $x$ of $xg^x \equiv c \pmod {p^e}$ where $p$ is an odd prime and $c$ and $g$ are fixed integers. At the same time, we discover special patterns in the solutions.\end{abstract}

\maketitle
\section{Introduction}
A discrete logarithm is an integer $x$ solving the equation $g^x \equiv c \pmod p$ for some integers $c$, $g$, and for a prime $p$.  Finding discrete logarithms for large primes and fixed values for $c$ and $g$, referred to in this paper as the discrete logarithm problem (DLP), is thought to be difficult. The exponential function is used in different forms of public-key cryptography where the security depends on the difficulty of finding solutions to the DLP.  One particular class of cryptosystems where the DLP is important are digital signature schemes, which enable a message's recipient to verify the identity of the sender.

A specific digital signature scheme important for our paper is the ElGamal digital signature scheme, which is a public key system. For this system the values made public are $p$, $g$, $m$, and $h\equiv g^x \pmod p$, while the values known only to the sender are $y$ and $x$.The signature $(s_1,s_2)$ is computed as follows: $s_1 \equiv g^y \pmod p$ and $s_2 \equiv y^{-1}(m-xs_1) \pmod{p-1}$, where $m$ is the message, $p$ is a large prime, $g$ is a generator for $p$, $x\in\set{1,\dots,p-2}$, and $y\in\set{1,\dots,p-2}$ such that $\gcd(y,p-1)=1$.  The recipient of message $m$ also receives the signature $(s_1,s_2)$ and verifies the message by computing $v_1 \equiv h^{s_1}s_1^{s_2} \pmod p$ and $v_2 \equiv g^m \pmod p$. If $v_1 \equiv v_2 \pmod p$ then the signature is considered authentic.

In order to forge a signature, there are several methods with which to attack the system. One could solve the DLP by computing $x$ from $h \equiv g^x \pmod p$ for a fixed $g$, $h$ and prime $p$.  Another method is to fix $s_1$ and solve for $s_2$, requiring finding solutions to the congruence $s_1^{s_2} \equiv g^mh^{-s_1} \pmod p$, which is equivalent to solving another DLP since the right hand side of this congruence is a constant.  Both of these attacks are considered to be sufficiently hard and thus not feasible as a method of forgery.  A third method is to fix $s_2$ and solve for $s_1$, requiring finding solutions to the congruence $h^{s_1}s_1^{s_2} \equiv g^m \pmod p$.  Rewriting this congruence, we see that solving it for $s_1$ is equivalent to solving the congruence $s_1 (h^{{s_2}^{-1}})^{s_1} \equiv g^{M{s_2}^{-1}} \pmod {p}$ for $s_1$. Finally, setting
 $a=h^{{s_2}^{-1}}$ and $b=g^{M{s_2}^{-1}}$, we see that solving these congruences is equivalent to solving the congruence $s_1 a^{s_1} \equiv b \pmod p$ for $s_1$ with a fixed $a$ and $b$.  Due to its similarity to the Lambert W function \cite{corless} and to distinguish it from the DLP, we will refer to the map $s_1 \to s_1 a^{s_1} \pmod p$ as the discrete Lambert map.  Thus we define the discrete Lambert problem (DWP) to be the problem of finding integers $x$ such that $xg^x \equiv c \pmod p$ for fixed integers $g$ and $c$.

While the DLP has been studied extensively, the DWP has received very little attention although some introductory work has been done by Chen and Lotts on the DWP modulo $p$ \cite{Chen_Lotts}.  The lack of attention received by the DWP is in part because it is considered to be more difficult to solve than the DLP, but due to the implications that it has on the security of the ElGamal scheme we believe that it is important to study.

Finding exact formulas for the solutions seems extremely difficult, but counting the number of solutions for a fixed $g$ and $c$ and in an extended range of values for $x$ is much easier.  In addition, we can find patterns in the solutions that will give us insight into the DWP.  Beyond finding solutions and patterns modulo an odd prime $p$, we also wanted to look at solutions modulo $p^e$ in a similar fashion to what Holden and Robinson \cite{holden_robinson} do for the DLP.

\section{Counting Solutions}
We begin by looking at the DWP modulo $p$, counting the solutions and finding patterns.  The following theorems describe the number of solutions:
\begin{theorem}
	\label{caiyun_p}
If $p$ is an odd prime, $g$ a generator modulo $p$, and $c\not\equiv 0 \pmod{p}$, then for fixed $g$ and $c$, if we consider the function
	\begin{equation}
	f(x)=xg^x-c \equiv 0 \pmod{p}
	\end{equation}
	where $x\in\set{1,\dots,p(p-1) \mid x\not\equiv 0 \mod p}$, then the number of $x$ such that $f(x)\equiv 0 \pmod{p}$ is $p-1$ and the solution set forms a complete residue system modulo $p-1$.  In other words, the solutions are distinct modulo $p-1$.
\end{theorem}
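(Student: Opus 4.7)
The plan is to use the Chinese Remainder Theorem to parameterize the range $\{x \in \{1,\ldots,p(p-1)\} : x \not\equiv 0 \pmod p\}$ by pairs of residues, one modulo $p$ and one modulo $p-1$, and then reduce the congruence $xg^x \equiv c \pmod p$ to something that only involves these two residues separately.

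First I would observe that since $\gcd(p, p-1) = 1$, the map $x \mapsto (x \bmod p,\, x \bmod (p-1))$ is a bijection between $\{1, \ldots, p(p-1)\}$ and $\{0,1,\ldots,p-1\} \times \{0,1,\ldots,p-2\}$. Excising the $p-1$ multiples of $p$ restricts the first coordinate to $\{1, \ldots, p-1\}$, leaving a set of size $(p-1)^2$ in bijection with pairs $(a,b)$ where $a \in \{1,\ldots,p-1\}$ and $b \in \{0,\ldots,p-2\}$.

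Next I would note that $g^x \pmod p$ depends only on $x \bmod (p-1)$ by Fermat's little theorem, since $g$ is a generator and hence has order $p-1$. Thus writing $x \equiv a \pmod p$ and $x \equiv b \pmod{p-1}$, the congruence $xg^x \equiv c \pmod p$ becomes $a g^b \equiv c \pmod p$, which rearranges to
\begin{equation*}
g^b \equiv c a^{-1} \pmod p.
\end{equation*}
For every fixed $a \in \{1,\ldots,p-1\}$, the right-hand side is a nonzero residue, and because $g$ is a generator there is exactly one $b \in \{0,\ldots,p-2\}$ solving this. This yields exactly $p-1$ solutions $x$ in the range, proving the count.

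For the second assertion, I would argue that as $a$ ranges over $\{1,\ldots,p-1\}$, the quantity $ca^{-1} \pmod p$ also ranges over $\{1,\ldots,p-1\}$ bijectively, so the corresponding discrete logarithms $b = \log_g(ca^{-1})$ hit each element of $\{0,\ldots,p-2\}$ exactly once. Hence the $p-1$ solutions realize each residue class modulo $p-1$ precisely once, i.e.\ they form a complete residue system modulo $p-1$. No step here looks like a serious obstacle; the only thing to be careful about is the bookkeeping of which residue systems I am using, and making sure the $p-1$ excluded multiples of $p$ don't accidentally satisfy the congruence (they cannot, since $xg^x \equiv 0 \pmod p$ when $p \mid x$, while $c \not\equiv 0 \pmod p$).
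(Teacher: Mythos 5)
Your proof is correct. It rests on the same two ingredients as the paper's argument --- the Chinese Remainder Theorem splitting of $x$ into its residues modulo $p$ and modulo $p-1$, and the fact that $g^x \bmod p$ depends only on $x \bmod (p-1)$ because $g$ has order $p-1$ --- but you run the parameterization in the opposite direction. The paper fixes the residue $x_0 = x \bmod (p-1)$ and solves uniquely for $x \bmod p$ (namely $x \equiv c g^{-x_0} \pmod p$), so it produces exactly one solution in each residue class modulo $p-1$ by construction, making the complete-residue-system statement immediate. You instead fix $a = x \bmod p \in \set{1,\dots,p-1}$ and solve uniquely for $b = x \bmod (p-1)$ via $g^b \equiv ca^{-1} \pmod p$; this makes the count of $p-1$ solutions immediate, but the distinctness modulo $p-1$ then needs your extra observation that $a \mapsto \log_g(ca^{-1})$ is a bijection from the nonzero residues modulo $p$ onto $\Z/(p-1)\Z$, which you supply correctly. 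The two arguments are dual to one another: yours never takes a discrete logarithm of the unknown itself until the final bijection step, while the paper's makes the structural conclusion (one solution per class modulo $p-1$) the organizing principle from the start. Your closing check that the excluded multiples of $p$ cannot be solutions (since then $xg^x \equiv 0 \not\equiv c \pmod p$) is a nice explicit verification that the paper leaves implicit.
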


\begin{proof} Since $g$ is a generator, we can take  the logorithm of equation (1) to get 
	\begin{equation}\log_gx+x \equiv \log_gc \pmod{p-1}.
	\end{equation}
	In order to show the solution set of equation (1) forms a complete residue system modulo $p-1$, we need to show there exist $p-1$  distinct solutions to equation (1), one for each $x_0$ such that
	\begin{equation}x \equiv x_0 \pmod{p-1}, \text{ for each } x_0\in \mathbb{Z}/(p-1) \mathbb{Z}.
	\end{equation}
	If we subtract equation (3) from equation (2), we get
	\begin{equation}\log_gx \equiv \log_gc-x_0 \pmod{p-1}.
	\end{equation}
	Then when we raise equation (4) to to power of $g$, we get 
	\begin{equation}x \equiv \frac{c}{g^{x_0}} \pmod{p}.
	\end{equation}
	Finally we can apply Chinese Remainder Theorem to equations (3) and (5), for each $x_0$, and conclude that there exist $p-1$ distinct solutions and they form a complete residue system modulo $p-1$.
	
\end{proof}

We can also look at what happens when $g$ is not a generator:
\begin{theorem}
	\label{anne_p}
	Let $p$ be an odd prime and $m=\ord_p(g)$. For fixed $g$ and $c$ such that $p\nmid g$ and $p\nmid c$, if we consider the function 
	$$f(x)=xg^x-c$$
	where $x\in\set{1,\dots,pm \mid x\not\equiv 0 \mod p}$, then the number of $x$ such that $f(x)\equiv 0 \pmod p$ is equal to $m$, and they are all distinct modulo $m$.
\end{theorem}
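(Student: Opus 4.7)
The plan is to adapt the argument of Theorem~\ref{caiyun_p}, replacing the role played by $p-1$ with $m=\ord_p(g)$. The key observation that makes this go through is that even though $g$ is not a generator, $g^x \pmod p$ still depends only on $x \pmod m$, so once a residue of $x$ modulo $m$ is fixed, the discrete Lambert congruence degenerates into an ordinary linear congruence modulo $p$.

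First I would parametrize potential solutions by their residues modulo $m$: fix $x_0\in\{0,1,\dots,m-1\}$ and hunt for $x$ satisfying $x\equiv x_0\pmod m$. Because $m=\ord_p(g)$, any such $x$ obeys $g^x\equiv g^{x_0}\pmod p$, so the congruence $xg^x\equiv c\pmod p$ becomes the linear condition
\[
x\equiv c\,g^{-x_0}\pmod p.
\]
Next I would pair this with $x\equiv x_0\pmod m$ and invoke the Chinese Remainder Theorem; since $m\mid p-1$, we have $\gcd(m,p)=1$, so the system has a unique solution $x\in\{1,\dots,mp\}$. Because $c\not\equiv0\pmod p$ and $g^{-x_0}$ is a unit modulo $p$, the resulting $x$ is automatically nonzero modulo $p$ and thus lies in the prescribed range.

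Finally I would let $x_0$ run over the $m$ residue classes modulo $m$, producing $m$ values of $x$ in $\{1,\dots,mp\}$ that are, by construction, pairwise distinct modulo $m$. Conversely, any solution has \emph{some} residue $x_0$ modulo $m$ and must coincide with the one produced for that $x_0$ by the uniqueness in CRT, so there are no other solutions. This gives exactly $m$ solutions, all distinct modulo $m$, as claimed.

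There is no real obstacle here; the argument is essentially Theorem~\ref{caiyun_p} with $p-1$ replaced by $m$. The only things worth double-checking when writing out the proof carefully are that $\gcd(m,p)=1$ (needed to apply CRT, and immediate from $m\mid p-1$) and that the forbidden residue $x\equiv0\pmod p$ never appears (immediate from $p\nmid c$ and $p\nmid g$).
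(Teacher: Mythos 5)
Your argument is correct and is essentially the same as the paper's: fix a residue $x_0$ modulo $m$, reduce the congruence to $x\equiv cg^{-x_0}\pmod p$, and apply the Chinese Remainder Theorem to get exactly one solution in $\{1,\dots,pm\}$ for each $x_0$. Your added checks that $\gcd(m,p)=1$ and that $x\not\equiv 0\pmod p$ automatically are small details the paper leaves implicit, but the route is the same.
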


\begin{proof}
	Let 
		\begin{align} 
			x\equiv x_0 \pmod m \label{eq1}.
		\end{align} Then we have the following equivalent statements:
	\begin{align}
		f(x)=xg^x - c &\equiv 0 \pmod p\notag\\
		xg^{x_0} -c &\equiv 0 \pmod p\notag\\
		xg^{x_0} &\equiv c \pmod p\notag\\
		x &\equiv cg^{-x_0} \pmod p \label{eq2}.
	\end{align}  So for each $x_0 \in\set{1,\dots,m}$ there is an $x\in\set{1,\dots,p}$, and so by the Chinese Remainder Theorem on equations \eqref{eq1} and \eqref{eq2} there is exactly one $x\in\set{1,\dots,pm}$ such that $x$ is a zero of $f(x)$ where $x\equiv x_0 \pmod m$.  Hence, the number of zeros $f(x) \equiv 0 \pmod p$ is equal to $m$, and they are all distinct modulo $m$.
\end{proof}

\section{Interpolation} \label{interpolation}
In order to count solutions of the DWP modulo $p^e$, we need to interpolate the function $f(x) = xg^x - c$, defined on $x \in \Z$ to a function on $x \in \Zp$, for $p$ an odd prime and fixed $g, c \in \Zp$. However, interpolation is only possible when $g \in 1 + p\Zp$ \cite{holden_robinson}.  In order to apply the following theorem from Katok \cite{katok}, we need to show $f(x)=xg^x-c$ is uniformly continuous if $g\in1+p\Zp$. Then we can interpolate $f: \Z \rightarrow \Zp$ to a new uniformly continuous function $f_{x_0}: \Zp \rightarrow \Zp$.

\begin{theorem}[Thm. 4.15 of \cite{katok}]\label{katok}
Let E be a subset of $\Zp$ and let $\bar{E}$ be its closure. Let $f: E\rightarrow \Qp$ be a function uniformly continuous on E. Then there exists a unique function $F: \bar{E} \rightarrow \Qp$ uniformly continuous and bounded on $\bar{E}$ such that
$$ F(x)=f(x) \text{ if } x \in E.$$
\end{theorem}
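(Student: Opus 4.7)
The plan is to invoke the standard extension-by-uniform-continuity construction from metric space theory, specialized to the ultrametric setting of $\Zp$ with values in the complete field $\Qp$. The strategy is to define $F$ pointwise via limits of sequences, verify well-definedness and compatibility with $f$ on $E$, then establish uniform continuity and boundedness, and finally argue uniqueness separately.

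First I would fix $x \in \bar{E}$ and choose a sequence $(x_n)$ in $E$ with $x_n \to x$; such a sequence exists by the definition of closure. Since $(x_n)$ is Cauchy in $\Zp$ and $f$ is uniformly continuous on $E$, the sequence $(f(x_n))$ is Cauchy in $\Qp$: given $\epsilon > 0$, select $\delta > 0$ from the uniform continuity of $f$, then pick $N$ so large that $\abs{x_n - x_m}_p < \delta$ for all $n, m \geq N$, which forces $\abs{f(x_n) - f(x_m)}_p < \epsilon$. Because $\Qp$ is complete, the limit $\lim_{n\to\infty} f(x_n)$ exists, and I set $F(x)$ equal to this limit. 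Independence from the choice of sequence follows from the same uniform continuity argument applied to the interleaved sequence $x_1, y_1, x_2, y_2, \ldots$ whenever $(y_n)$ is another sequence in $E$ with $y_n \to x$. Taking the constant sequence $x_n = x$ shows $F(x) = f(x)$ whenever $x \in E$.

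Next I would verify that $F$ is uniformly continuous on $\bar{E}$. Given $\epsilon > 0$, choose $\delta > 0$ so that $u, v \in E$ with $\abs{u - v}_p < \delta$ implies $\abs{f(u) - f(v)}_p < \epsilon/2$. For $x, y \in \bar{E}$ with $\abs{x - y}_p < \delta$, pick sequences $x_n \to x$ and $y_n \to y$ in $E$; for all sufficiently large $n$, the ultrametric inequality gives $\abs{x_n - y_n}_p \leq \max(\abs{x_n - x}_p, \abs{x - y}_p, \abs{y - y_n}_p) < \delta$, so $\abs{f(x_n) - f(y_n)}_p < \epsilon/2$. Passing to the limit yields $\abs{F(x) - F(y)}_p \leq \epsilon/2 < \epsilon$. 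Boundedness is then immediate: $\bar{E}$ is closed in the compact space $\Zp$, hence compact, and the continuous image of a compact set in $\Qp$ is bounded.

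For uniqueness, suppose $F_1$ and $F_2$ are two uniformly continuous extensions of $f$ to $\bar{E}$. Given $x \in \bar{E}$ and any sequence $x_n \to x$ with $x_n \in E$, continuity gives $F_1(x) = \lim f(x_n) = F_2(x)$, so $F_1 = F_2$. I expect the main care to be in the uniform continuity step, where the $\epsilon/\delta$ data for $f$ must be transferred cleanly to $F$ via the approximating sequences, but the ultrametric structure of $\Zp$ introduces no real complication beyond the standard metric argument.
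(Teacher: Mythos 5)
Your argument is correct, and there is nothing in the paper to compare it against: the paper states this result as a quoted theorem (Thm.\ 4.15 of Katok) and gives no proof of its own. Your construction---define $F$ on $\bar{E}$ by limits of $f$ along sequences from $E$, using uniform continuity to get Cauchyness and completeness of $\Qp$ to get convergence, then check well-definedness, uniform continuity, boundedness (via compactness of $\bar{E}$ as a closed subset of the compact set $\Zp$, which is exactly where the hypothesis $E \subseteq \Zp$ is used), and uniqueness by density---is the standard extension-by-uniform-continuity proof, essentially the one found in the cited source.
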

\begin{proposition}
If $p$ is an odd prime, $c \in \Zp$ is fixed, and $g \in 1+p\Zp$, then $f(x)=xg^x-c$ is uniformly continuous for $x \in \Z.$
\end{proposition}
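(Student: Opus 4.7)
The plan is to show a stronger statement, that $f$ is in fact Lipschitz (with constant $1$) on $\Z$ in the $p$-adic metric, which immediately gives uniform continuity. Since the additive constant $-c$ is irrelevant, I would work with $xg^x - yg^y$ and decompose it via the telescoping identity
\begin{equation*}
xg^x - yg^y \;=\; x(g^x - g^y) \;+\; (x-y)\,g^y.
\end{equation*}
Because $x \in \Z \subset \Zp$ satisfies $|x|_p \le 1$ and because $g \in 1+p\Zp$ is a unit so that $|g^y|_p = 1$, the ultrametric inequality reduces the problem to estimating $|g^x - g^y|_p$ in terms of $|x-y|_p$.

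The heart of the argument is the following auxiliary claim: for $g \in 1+p\Zp$ and any integers $x \equiv y \pmod{p^k}$ one has $g^x \equiv g^y \pmod{p^{k+1}}$. I would prove it by writing $g^x - g^y = g^y(g^{x-y} - 1)$ and, after setting $x-y = p^k m$, invoking the standard fact that raising a $1$-unit to the $p$-th power gains one extra $p$ in the exponent of congruence. Formally, I would prove by induction on $k$ that $g^{p^k} \in 1 + p^{k+1}\Zp$: the inductive step expands $(1+p^{k+1}a)^p$ by the binomial theorem, and every term $\binom{p}{j} p^{j(k+1)} a^j$ for $1 \le j \le p$ is checked to have $p$-adic valuation $\ge k+2$. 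Here the hypothesis that $p$ is odd is used when handling the $j=2$ term (where one needs $p \mid \binom{p}{2}$ to offset the factor of $2$ in the denominator).

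Given this, if $|x-y|_p \le p^{-k}$ then $|g^x - g^y|_p \le p^{-(k+1)} \le |x-y|_p$. Combining with the decomposition above and the ultrametric inequality yields
\begin{equation*}
|f(x) - f(y)|_p \;\le\; \max\bigl(|x|_p\,|g^x - g^y|_p,\;|x-y|_p\,|g^y|_p\bigr) \;\le\; |x-y|_p,
\end{equation*}
which is the desired Lipschitz (hence uniform continuity) bound on $\Z$.

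The main technical obstacle is the lifting-the-exponent estimate $(1+p\Zp)^{p^k} \subseteq 1+p^{k+1}\Zp$, since this is exactly the step that fails when $p=2$ (the $j=2$ binomial coefficient no longer supplies a compensating $p$) and also exactly the step where the assumption $g \in 1+p\Zp$ is essential—without it, $|g^x - g^y|_p$ need not tend to $0$ with $|x-y|_p$, and no continuous extension of $f$ to $\Zp$ exists.
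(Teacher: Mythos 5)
Your proof is correct and is essentially the paper's argument: both split $xg^x-yg^y$ by the same telescoping trick into a piece controlled by $|g^{x-y}-1|_p$ and a piece controlled by $|x-y|_p$, and both get the key estimate from a binomial expansion showing a $1$-unit raised to a power divisible by $p^N$ lies in $1+p^N\Zp$; you merely package it as a Lipschitz bound with constant $1$ (slightly sharper than the paper's explicit $\epsilon$--$\delta$ statement) and prove the unit estimate by induction on $k$ rather than in one expansion. One small inaccuracy in an aside: in the inductive step the $j=2$ term already has valuation $2(k+1)\ge k+2$ from the factor $p^{2(k+1)}$ alone, so no appeal to $p\mid\binom{p}{2}$ (and hence no use of $p$ being odd) is needed there --- this does not affect the validity of your proof for odd $p$.
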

\begin{proof}
Suppose $g=1+pA$ where $A \in \Zp$.  We know that given any $\epsilon >0$, there exists an $N$ such that $p^{-N} < \epsilon$. Let $x,y \in \Z$ such that $$|x-y|_p \leq p^{-N} < p^{-(N-1)}=\delta,$$ or $(x-y) \in p^N\Zp$, and $x=y+bp^N$ where $b\in \Z$, then we need to show that
	$$|xg^x-c-(yg^y-c)|_p<\epsilon.$$
	
Note that
\begin{align*}
	g^{bp^N} &= (1+pA)^{bp^N}\\
	&= 1+ bp^NpA + \dots + (pA)^{bp^N}\\
	&\in 1+ p^N\Zp,
\end{align*}
so we know that $g^{bp^N} -1 \in p^N\Zp$, or $|g^{bp^N} -1|_p \leq p^{-N}$. Further, since $y\in\Z$, $|y|_p \leq 1$.  Also note that $|bp^Ng^{bp^N}|_p \leq p^{-N}.$
Now, consider \begin{align*}
		|xg^x-yg^y|_p & = |(y+bp^N)g^{y+bp^N}-yg^y|_p \\
			& =|yg^ybp^N+bp^Ng^{y+bp^N}-yg^y|_p \\
			& =|g^y|_p |yg^{bp^N}+bp^Ng^{bp^N}-y|_p, \text{ and since } g\in1+p\Zp ,\: |g^y|_p = 1\\
			&= |yg^{bp^N}+bp^Ng^{bp^N}-y|_p \\
			&= |(g^{bp^N} -1)y + bp^Ng^{bp^N}|_p\\
			&\leq \max\left(|g^{bp^N} -1|_p |y|_p, |bp^Ng^{bp^N}|_p \right)\\
			& \leq p^{-N}.
		\end{align*}
Hence, if $p$ is an odd prime, $c \in \Zp$ is fixed, and $g \in 1+p\Zp$, we have shown that $f(x)=xg^x-c$ is uniformly continuous for $x \in \Z$. 
\end{proof}

Now we can apply Theorem \ref{katok} of \cite{katok} to interpolate $f: \Z \rightarrow \Zp$ to a function $f_{x_0}: \Zp \rightarrow \Zp$.  If we let $\omega(g)$ be a $(p-1)^{th}$ of $1$ in $\Zp$ which is also called the Teichm\"uller character of $g$ and $\oneunit g\in1+p\Zp$, then we can rewrite $g=\omega(g)\oneunit{g}$ where $\oneunit{g}= \frac{g}{\omega(g)} \in 1+p\Z_p$. So we can consider a new function $f_{x_0}(x) = \omega(g)^{x_0}\oneunit{g}^x-c$, and we have the following proposition.



\begin{proposition}
	For an odd prime $p$, let $g\in \Zp$ such that $p\nmid g$ and $x_0 \in \Z/(p-1)\Z$, and let
	$$I_{x_0} = \set{x \in \Z \mid x \equiv x_0 \pmod{p-1}} \subseteq \Z.$$
Then 
	$$f_{x_0}(x) = x\omega(g)^{x_0}\oneunit{g}^x-c$$
	defines a uniformly continuous function on $\Zp$ such that $f_{x_0}(x) = f(x)$ whenever $x\in I_{x_0}$.

\end{proposition}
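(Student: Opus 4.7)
The plan is to bootstrap this proposition from the previous one and then appeal to Theorem~\ref{katok}. I would first verify the agreement $f_{x_0}(x) = f(x)$ on $I_{x_0}$, then establish uniform continuity of $f_{x_0}$ on $\Z$, and finally extend to $\Zp$ via Katok's theorem.

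For the agreement step, I would write $g = \omega(g)\oneunit{g}$. Since $\omega(g)$ is a $(p-1)$-th root of unity in $\Zp$, any $x \in I_{x_0}$ satisfies $\omega(g)^x = \omega(g)^{x_0}$, so $g^x = \omega(g)^{x_0}\oneunit{g}^x$. Multiplying by $x$ and subtracting $c$ then yields $f(x) = x\omega(g)^{x_0}\oneunit{g}^x - c = f_{x_0}(x)$, settling the identification on $I_{x_0}$.

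For uniform continuity on $\Z$, the key observation is that $\omega(g)^{x_0}$ is a unit in $\Zp$ with $|\omega(g)^{x_0}|_p = 1$, so multiplication by it is an isometry. Hence $f_{x_0}$ is uniformly continuous on $\Z$ if and only if the rescaled function $\omega(g)^{-x_0} f_{x_0}(x) = x\oneunit{g}^x - c\omega(g)^{-x_0}$ is, and the latter is precisely of the form handled by the previous proposition, applied with $g$ replaced by $\oneunit{g} \in 1 + p\Zp$ and $c$ replaced by $c\omega(g)^{-x_0} \in \Zp$. Uniform continuity on $\Z$ is therefore immediate.

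Finally, I would apply Theorem~\ref{katok} with $E = \Z$ to extend $f_{x_0}|_{\Z}$ uniquely to a uniformly continuous function on $\overline{\Z} = \Zp$; the expression $\oneunit{g}^x$ for $x \in \Zp$ is interpreted as this continuous extension, which exists precisely because $\oneunit{g} \in 1 + p\Zp$. Agreement with $f$ on $I_{x_0} \subseteq \Z$ is preserved automatically by the density of $I_{x_0}$ considerations not being required --- the equality simply holds pointwise on the subset of $\Z$ where both formulas are defined. I do not foresee a real obstacle here: the substantive estimate was already carried out in the preceding proposition, and what remains is essentially bookkeeping around the Teichm\"uller decomposition $g = \omega(g)\oneunit{g}$ together with the periodicity $\omega(g)^{p-1} = 1$.
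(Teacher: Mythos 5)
Your proposal is correct and follows essentially the argument the paper intends (the paper states this proposition without a written proof, relying on the surrounding discussion): factor out the Teichm\"uller unit $\omega(g)^{x_0}$ to reduce uniform continuity on $\Z$ to the preceding proposition applied with $\oneunit{g}\in 1+p\Zp$ and the shifted constant, check $f_{x_0}=f$ on $I_{x_0}$ via $\omega(g)^{p-1}=1$, and extend uniquely to $\Zp=\overline{\Z}$ by Theorem~\ref{katok}. Your write-up is a faithful and complete version of exactly that route, so no further changes are needed.
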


\section{Hensel's Lemma} \label{hensel}

\begin{lemma} [Hensel's Lemma, Cor.3.3 of~\cite{holden_robinson}] \label{hensel-lemma} 
Let $f(x)$ be a convergent power series in $\Zp[[x]]$ and let $a\in\Zp$ such that $\frac{df}{dx}(a)\not \equiv 0 \pmod{p}$ and $f(a) \equiv 0 \pmod p$.  Then there exists a unique $x \in \Zp$ for which $x \equiv a \pmod p$ and $f(x)=0$ in $\Zp$.
\end{lemma}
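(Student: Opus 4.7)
The plan is to produce the root by $p$-adic Newton iteration starting from $a$ and then to prove uniqueness via a $p$-adic Taylor expansion.

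I would first define a sequence $(a_n)_{n\geq 0}$ in $\Zp$ by $a_0 = a$ and $a_{n+1} = a_n - f(a_n)/f'(a_n)$. By induction $a_n \equiv a \pmod{p}$: given this, $f'(a_n) \equiv f'(a) \not\equiv 0 \pmod{p}$ is a unit in $\Zp$, so the division is legal, and $a_{n+1}-a_n = -f(a_n)/f'(a_n) \in p\Zp$ because $f(a_n) \in p\Zp$ (also propagated by induction). Writing $f = \sum c_k x^k$, the $p$-adic Taylor expansion
\begin{equation*}
f(a_n + h) = f(a_n) + f'(a_n)\, h + h^2\, R_n(h),\qquad R_n(h) \in \Zp,
\end{equation*}
is valid for all $h \in \Zp$, since the coefficient of $h^k$ is $\sum_{j\geq k}\binom{j}{k} c_j a_n^{j-k} \in \Zp$ and the series converges as $f$ does.

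Substituting $h = -f(a_n)/f'(a_n)$ cancels the first two terms, so $\ord_p(f(a_{n+1})) \geq 2\, \ord_p(f(a_n))$ and hence $\ord_p(f(a_n)) \geq 2^n$ by induction. Consequently $\ord_p(a_{n+1} - a_n) = \ord_p(f(a_n)) \to \infty$, so $(a_n)$ is Cauchy in the complete space $\Zp$ and converges to some $x \in \Zp$. Since each $a_n \equiv a \pmod p$, also $x \equiv a \pmod p$, and continuity of the convergent power series $f$ gives $f(x) = \lim f(a_n) = 0$.

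For uniqueness, suppose $y \in \Zp$ also satisfies $f(y)=0$ and $y \equiv a \pmod p$, and set $h = y - x \in p\Zp$. The same Taylor expansion at $x$ gives
\begin{equation*}
0 = f(y) = f(x) + f'(x)\, h + h^2 R(h) = f'(x)\, h + h^2 R(h).
\end{equation*}
If $h \neq 0$, dividing by $h$ and reducing modulo $p$ yields $f'(x) \equiv 0 \pmod p$, contradicting $f'(x) \equiv f'(a) \not\equiv 0 \pmod p$. Hence $h = 0$.

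The main technical point, and the one I would take most care with, is justifying the Taylor expansion for a general convergent series in $\Zp[[x]]$: one must check that the expansion coefficients $b_k(a_n) = \sum_{j\geq k}\binom{j}{k} c_j a_n^{j-k}$ actually lie in $\Zp$ and that the tail $\sum_{k\geq 2} b_k(a_n) h^k$ is not merely formal but $p$-adically convergent and divisible by $h^2$. Once this is set up, both existence and uniqueness reduce to routine manipulation of $p$-adic valuations.
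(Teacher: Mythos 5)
Your proof is correct. Note, however, that the paper itself gives no argument for this lemma at all: it is imported as a black box, cited as Corollary 3.3 of Holden and Robinson, and used only through its statement in Lemma 3 and Proposition 5. So there is no in-paper proof to compare against; what you have written is the standard Newton--Hensel iteration argument, and it is sound. The induction keeping $a_n \equiv a \pmod p$ (hence $f'(a_n)$ a unit) and $f(a_n)\in p\Zp$ is set up correctly, the estimate $\ord_p(f(a_{n+1})) \geq 2\,\ord_p(f(a_n))$ gives $\ord_p(f(a_n)) \geq 2^n$ and hence a Cauchy sequence, and the uniqueness argument via $0 = f'(x)h + h^2R(h)$ with $h \in p\Zp$ is clean. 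You also correctly identify the one genuinely technical point for a convergent series $f=\sum c_k x^k$ on $\Zp$ (equivalently $|c_k|_p \to 0$): the Taylor coefficients $b_k(a)=\sum_{j\geq k}\binom{j}{k}c_j a^{j-k}$ lie in $\Zp$ and the rearrangement $f(a+h)=\sum_k b_k(a)h^k$ is legitimate because in a complete ultrametric field a double series whose terms tend to zero may be summed in either order; you should also note that the same coefficient bound $|b_k(a)|_p\le\sup_{j\ge k}|c_j|_p\to 0$ is what makes $x\equiv y \pmod p \Rightarrow f'(x)\equiv f'(y)\pmod p$, which you use twice. A marginally more elementary alternative, closer in spirit to how such lemmas are usually proved in the cited sources, is the linear lifting argument (inductively producing the unique root modulo $p^n$ for each $n$); it avoids quadratic convergence bookkeeping and delivers uniqueness modulo each $p^n$ directly, whereas your Newton iteration buys faster convergence and a self-contained uniqueness step via the Taylor expansion. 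Either route is acceptable; yours fills the gap the paper leaves to the literature.
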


\begin{lemma}\label{lemma}
If we consider the function $$f_{x_0}(x)= x\omega(g)^{x_0}\exp(x\log (\oneunit{g}))-c$$ for any $a\in\Zp$ such that $f(a) \equiv 0 \pmod p$, then $\frac{df}{dx}(a)\not \equiv 0 \pmod{p}$.  
\end{lemma}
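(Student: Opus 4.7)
The plan is to write $f_{x_0}'(a)$ as a product of factors and show each factor is a $p$-adic unit, in which case the hypothesis $f_{x_0}(a)\equiv 0\pmod p$ turns out to be irrelevant: the derivative is a unit at \emph{every} $a\in\Zp$.

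First I would apply the product rule, using the standard $p$-adic identity $\frac{d}{dx}\exp(x\log\langle g\rangle) = \log\langle g\rangle\cdot\exp(x\log\langle g\rangle)=\langle g\rangle^{x}\log\langle g\rangle$. Differentiating $f_{x_0}(x)=x\,\omega(g)^{x_0}\langle g\rangle^{x}-c$ then yields
\[
f_{x_0}'(x) \;=\; \omega(g)^{x_0}\,\langle g\rangle^{x}\bigl(1+x\log\langle g\rangle\bigr).
\]

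Next I would reduce each factor modulo $p$. Since $p\nmid g$, the Teichm\"uller lift $\omega(g)$ is a $(p-1)$th root of unity in $\Zp$ and in particular a unit mod $p$. Since $\langle g\rangle\in 1+p\Zp$, the power series defining $\langle g\rangle^{a}=\exp(a\log\langle g\rangle)$ shows $\langle g\rangle^{a}\in 1+p\Zp$, so $\langle g\rangle^{a}\equiv 1\pmod p$. Finally, the $p$-adic logarithm carries $1+p\Zp$ into $p\Zp$, so $\log\langle g\rangle\in p\Zp$, giving $a\log\langle g\rangle\in p\Zp$ and hence $1+a\log\langle g\rangle\equiv 1\pmod p$.

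Combining these reductions yields $f_{x_0}'(a)\equiv \omega(g)^{x_0}\pmod p$, which is a unit and thus nonzero modulo $p$. The main (very minor) obstacle is simply keeping the roles of the $p$-adic exponential and logarithm straight—in particular, the fact that $\log$ lands in $p\Zp$ is what makes the $1+x\log\langle g\rangle$ factor reduce to $1$ and makes the whole argument independent of the location of $a$.
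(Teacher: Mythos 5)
Your proof is correct and follows essentially the same route as the paper: differentiate, use $\log(\oneunit{g})\in p\Zp$ and $\exp(a\log(\oneunit{g}))\in 1+p\Zp$ to reduce the derivative to $\omega(g)^{x_0}\pmod p$, which is a unit. Your observation that the hypothesis $f(a)\equiv 0\pmod p$ is never needed also matches the paper's argument, which likewise establishes the nonvanishing of the derivative at every point of $\Zp$.
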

\begin{proof} 
Consider
	$$f(x)=xg^x-c \pmod{p}.$$
	If we take $x_0 \in \Z/m\Z$ where $m=\ord_p(g)$, we have
	$$f_{x_0}(x)= x\omega(g)^{x_0}\exp(x\log (\oneunit{g}))-c.$$ 
	Note that $\oneunit{g}\in 1+p\Z_p$. Furthermore, since $\log (\oneunit{g}) \in p\Zp$ then by the definition of the $p$-adic exponential function we know that $\exp(x\log(\oneunit{g})) \in 1+p\Zp$.  Taking the derivative of $f_{x_0}(x)$ (see proposition 4.4.4 of \cite{gouvea}), we have	
	\begin{align*}
		\frac{df_{x_0}}{dx}(x) & = \omega(g)^{x_0}\exp(x\log(\oneunit{g}))+x\omega(g)^{x_0}\exp(x\log(\oneunit{g}))\log(\oneunit{g}) \\
		& \equiv \omega(g)^{x_0}\exp(x\log(\oneunit{g})) \pmod{p}\\
		& \equiv  \omega(g)^{x_0} \pmod{p}\\
		&\not\equiv 0  \pmod{p}.
	\end{align*}
\end{proof}

%

\begin{proposition} \label{powerseries} For $p$ an odd prime, let $g \in \Zp^\times$ be fixed and let $m=\ord_p(g)$.  Then for every $x_0 \in \Z/m\Z$, there is exactly one solution to the function
$$f_{x_0}(x)=\omega(g)^{x_0} \oneunit{g}^x -c\equiv 0 \pmod p$$
for $x \in \Zp$.
\end{proposition}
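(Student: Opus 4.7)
The plan is to exhibit one zero of $f_{x_0}$ modulo $p$ in $\Zp$, verify the hypotheses of Hensel's Lemma at that approximate zero, and then invoke Hensel to get a unique lift (and hence a unique zero in all of $\Zp$).

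First, I would reduce modulo $p$. Since $\oneunit{g}\in 1+p\Zp$ and $\log\oneunit{g}\in p\Zp$, the identity $\oneunit{g}^x = \exp(x\log\oneunit{g})$ shows that $\oneunit{g}^x\in 1+p\Zp$ for \emph{every} $x\in\Zp$, not merely integer $x$. Thus modulo $p$ the reduction is $f_{x_0}(x) \equiv x\,\omega(g)^{x_0} - c \pmod{p}$, a linear congruence in $x$. Because $\omega(g)\in\Zp^{\times}$, this has the unique solution $a \equiv c\,\omega(g)^{-x_0}\pmod{p}$.

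Second, I would check that Hensel's Lemma (Lemma \ref{hensel-lemma}) applies at $a$. The function $f_{x_0}$ is a convergent power series on $\Zp$, since expanding $\exp(x\log\oneunit{g})$ produces a power series with coefficients in $\Zp$ that converges on all of $\Zp$ (as $\log\oneunit{g}\in p\Zp$). The hypothesis $f_{x_0}(a)\equiv 0\pmod p$ holds by construction, and $\frac{df_{x_0}}{dx}(a)\not\equiv 0\pmod{p}$ is exactly the content of Lemma \ref{lemma}. Hensel's Lemma then yields a unique $x\in\Zp$ with $x\equiv a \pmod p$ for which $f_{x_0}(x) = 0$ in $\Zp$.

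For uniqueness across all of $\Zp$, any $y\in\Zp$ with $f_{x_0}(y)=0$ must in particular satisfy $f_{x_0}(y)\equiv 0\pmod p$, and the first-step analysis forces $y\equiv a\pmod p$; the Hensel uniqueness then identifies $y$ with the lift above.

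The main obstacle I anticipate is the reduction step for arbitrary $x\in\Zp$: one must invoke the $p$-adic exponential rather than naive integer exponentiation to conclude $\oneunit{g}^x\equiv 1\pmod p$ uniformly, and must recognize $f_{x_0}$ as a convergent power series so that Hensel's Lemma is applicable in the form stated. Once those identifications are made, the rest of the argument is essentially a direct citation of Lemma \ref{lemma} and Lemma \ref{hensel-lemma}.
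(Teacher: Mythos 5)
Your proposal is correct and follows essentially the same route as the paper's own proof: reduce modulo $p$ (using $\oneunit{g}^x \equiv 1 \pmod p$) to get the unique linear solution, verify that $f_{x_0}$ is a convergent power series, and then apply Lemma \ref{lemma} together with Hensel's Lemma. Your explicit final paragraph on uniqueness over all of $\Zp$ is a slightly more careful articulation of what the paper leaves implicit, but it is not a different argument.
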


\begin{proof} We know that $\oneunit{g} \equiv 1 \pmod{p}$, so the equation simplifies to
$$x\omega(g)^{x_0} \equiv c\pmod{p}.$$
For fixed $g$ and $x_0$, this has exactly one solution.

We know that $\oneunit{g}$ is in $1+p\Zp$, so we can say
\begin{eqnarray*}
\oneunit{g}^{x}=\exp(x \log(\oneunit{g}))=1&+&x\log(\oneunit{g})+
x^2\log( \oneunit{g})^2/2! \\
&+& \mbox{higher order terms in powers of }
\log(\oneunit{g}).
\end{eqnarray*}  By the definition of the $p$-adic logarithm we know $\log(\oneunit{g}) \in p\Zp$.  Since $$\lim_{i \to \infty} |\log(\oneunit{g})^i/i!|_p = 0,$$ we have a convergent power series.  We showed in Lemma \ref{lemma} that $f_{x_0}(x)$ satisfies the rest of the conditions of Hensel's Lemma, so we can apply the lemma to say there is a unique solution for $x\in \Zp$ such that $f_{x_0}(x)\equiv 0 \pmod p$.
\end{proof}

Now we can take Theorems \ref{caiyun_p} and \ref{anne_p} and generalize them to consider solutions modulo $p^e$.
\begin{theorem}[Generalization of Theorem \ref{anne_p}]
\label{anne_pe}
		Let $p$ be an odd prime and $m=\ord_p(g)$. For fixed $g$ and $c$ such that $p\nmid g$ and $p\nmid c$, if we consider the function 
	$$f(x)=xg^x-c$$
	where $x\in\set{1,\dots,p^em \mid x\not\equiv 0 \mod p}$, then the number of $x$ such that $f(x)\equiv 0 \pmod{p^e}$ is equal to $m$, and they are all distinct modulo $m$.
\end{theorem}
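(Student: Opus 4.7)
My plan is to lift the $m$ mod-$p$ solutions from Theorem \ref{anne_p} to mod-$p^e$ solutions by invoking the interpolated function $f_{x_0}$ and the Hensel machinery of Section \ref{hensel}, then package the count with the Chinese Remainder Theorem.

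First I would fix $x_0 \in \Z/m\Z$. Because $\omega(g)$ is the Teichm\"uller lift of $g \bmod p$, its order in $\Zp^\times$ equals $m$, and hence the interpolant $f_{x_0}(x) = x\omega(g)^{x_0}\oneunit{g}^x - c$ agrees with $f(x) = xg^x - c$ on every integer $x$ with $x \equiv x_0 \pmod m$. Proposition \ref{powerseries} (via Lemma \ref{lemma} and Hensel's Lemma) gives a unique root $\alpha_{x_0} \in \Zp$ of $f_{x_0}$; reducing mod $p$, where $\oneunit{g} \equiv 1$, shows $\alpha_{x_0} \equiv c\,\omega(g)^{-x_0} \pmod p$, which is nonzero because $p \nmid c$.

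Next I would upgrade from a $p$-adic root to a unique residue class modulo $p^e$. The derivative computation in the proof of Lemma \ref{lemma} in fact gives $\frac{df_{x_0}}{dx}(y) \equiv \omega(g)^{x_0} \pmod p$ for every $y \in \Zp$, so $|f'_{x_0}(y)|_p = 1$ throughout $\Zp$. By the standard inductive form of Hensel's Lemma, any $y \in \Zp$ with $f_{x_0}(y) \equiv 0 \pmod{p^e}$ is forced to satisfy $y \equiv \alpha_{x_0} \pmod{p^e}$, so $\alpha_{x_0} \bmod p^e$ is the unique solution of $f_{x_0}(x) \equiv 0 \pmod{p^e}$ in $\Z/p^e\Z$.

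Finally I would count. An integer $x \in \{1, \dots, p^e m\}$ with $p \nmid x$ satisfies $f(x) \equiv 0 \pmod{p^e}$ exactly when, setting $x_0 := x \bmod m$, one has $x \equiv \alpha_{x_0} \pmod{p^e}$. Since $m \mid p-1$ yields $\gcd(m, p^e) = 1$, CRT applied to $x \equiv x_0 \pmod m$ and $x \equiv \alpha_{x_0} \pmod{p^e}$ produces exactly one such $x$ in $\{1, \dots, m p^e\}$ for each $x_0 \in \{0, \dots, m-1\}$; the hypothesis $p \nmid c$ forces $p \nmid \alpha_{x_0}$, hence $p \nmid x$. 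This yields $m$ solutions, visibly distinct modulo $m$. The main obstacle is the step from ``unique $p$-adic root'' to ``unique root modulo $p^e$,'' which rests on the global non-vanishing of $f'_{x_0}$ mod $p$ supplied by Lemma \ref{lemma}; everything else is CRT bookkeeping and the observation that $\omega(g)^x$ depends only on $x \bmod m$.
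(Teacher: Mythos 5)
Your proposal is correct and follows essentially the same route as the paper: interpolate $f$ to $f_{x_0}(x)=x\omega(g)^{x_0}\oneunit{g}^x-c$ for each $x_0\in\Z/m\Z$, apply Lemma \ref{lemma}, Proposition \ref{powerseries} and Hensel's Lemma to get a unique solution modulo $p^e$, and finish with the Chinese Remainder Theorem. Your treatment is in fact slightly more careful than the paper's at two points it leaves implicit --- justifying the bijection between solutions mod $p$ and mod $p^e$ via the everywhere-unit derivative, and checking $\gcd(m,p^e)=1$ and $p\nmid x$ --- but the underlying argument is the same.
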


\begin{proof}
We can use Hensel's Lemma to count the number of solutions modulo $p^e$ given the number of solutions modulo $p$. In other words, the number of solutions to
$$f_{x_0}(x)=x\omega(g)^{x_0} \exp(x\log (\oneunit{g} ) ) -c \equiv 0\pmod{p^e}$$  is the same as the number of solutions to
$$f_{x_0}(x)= x\omega(g)^{x_0}\exp(x\log ( \oneunit{g})) -c\equiv 0 \pmod{p}$$ 
because of the bijection from the solution set of $f_{x_0}(x)\equiv 0$ modulo $p$ to the solution set modulo $p^e$.  We showed in Proposition \ref{powerseries} that there is exactly one $x_1\in\set{1,\dots,p}$ such that
$$x_1\omega(g)^{x_0} \oneunit{g}^{x_1}  \equiv c\pmod{p},$$ so using Hensel's Lemma there is exactly one $x_1\in\set{1,\dots,p^e}$ such that $$x_1\omega(g)^{x_0} \oneunit{g}^{x_1}  \equiv c\pmod{p^e}.$$   By the Chinese Remainder Theorem, there will be exactly one $x\in\set{1,\dots,p^em}$ such that 
$$x\equiv x_0 \pmod{m}$$ and
$$x\equiv x_1 \pmod{p^e}.$$
From the interpolation above we had $x\equiv x_0 \pmod m$,  and we know that for this $x\in\set{1,\dots,p^em}$:
$$f_{x_0}(x) = x\omega(g)^{x_0}\oneunit{g}^x -c\equiv 0 \pmod{p^e}.$$  Since there is exactly one such $x$ for each $x_0\in\set{1,\dots,m}$, there are $m$ solutions to $f(x) \equiv 0 \pmod{p^e}$.
\end{proof}

\begin{corollary}[Generalization of Theorem \ref{caiyun_p}]
	\label{caiyun_pe}
If $p$ is an odd prime, $g$ a generator modulo $p$, and $c\not\equiv 0 \pmod{p}$, then for fixed $g$ and $c$, if we consider the function
	\begin{equation}
	f(x)=xg^x-c
	\end{equation}
	where $x\in\set{1,\dots,p^e(p-1) \mid x\not\equiv 0 \mod p}$, then the number of $x$ such that $f(x)\equiv 0 \pmod{p^e}$ is $p-1$ and the solution set forms a complete residue system modulo $p-1$.
\end{corollary}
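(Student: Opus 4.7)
The plan is to deduce this directly from Theorem \ref{anne_pe} by specializing to the case where $g$ is a primitive root modulo $p$. The essential observation is that the hypothesis ``$g$ is a generator modulo $p$'' forces $m = \ord_p(g) = p-1$, which is exactly the value that converts Theorem \ref{anne_pe}'s conclusion into the statement we want.

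First I would note that the hypotheses of Corollary \ref{caiyun_pe} imply the hypotheses of Theorem \ref{anne_pe}: since $g$ is a generator mod $p$, certainly $p \nmid g$; and $c \not\equiv 0 \pmod p$ is the same condition on $c$. Moreover, the range $x \in \{1,\ldots,p^e(p-1) \mid x \not\equiv 0 \bmod p\}$ is precisely the range $\{1,\ldots,p^e m \mid x \not\equiv 0 \bmod p\}$ from Theorem \ref{anne_pe} with $m = p-1$. Applying Theorem \ref{anne_pe} then yields exactly $m = p-1$ solutions to $f(x) \equiv 0 \pmod{p^e}$, and these solutions are pairwise distinct modulo $m = p-1$.

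Finally, to upgrade ``distinct modulo $p-1$'' to ``complete residue system modulo $p-1$,'' I would use a counting argument: the $p-1$ solutions inject into $\Z/(p-1)\Z$, which itself has cardinality $p-1$, so the injection is in fact a bijection. Hence the solution set forms a complete residue system modulo $p-1$, completing the proof.

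There is no real obstacle here; the work was done in Theorem \ref{anne_pe}. The only thing to check carefully is the bookkeeping between the general statement with $m$ and the specialized statement with $p-1$, and the pigeonhole step promoting distinctness to a complete residue system.
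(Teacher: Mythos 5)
Your proposal is correct and follows the same route as the paper: specialize Theorem \ref{anne_pe} to $m=\ord_p(g)=p-1$, and note that $p-1$ solutions which are pairwise distinct modulo $p-1$ necessarily form a complete residue system modulo $p-1$. The only difference is that you make the pigeonhole step explicit, which the paper leaves implicit.
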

\begin{proof}
	Since $g$ is a generator modulo $p$, $m=\ord_p(g)=p-1$.  Then we can apply Theorem \ref{anne_pe} and there are $p-1$ solutions to $xg^x \equiv c \pmod{p^e}$ and they form a complete residue system modulo $p-1$  because they are distinct modulo $p-1$.
\end{proof}

\section{Patterns in the Solutions}
After counting the number of solutions to the DWP, we looked at patterns relating to $g$ and $c$ in the solutions modulo $p$ and modulo $p^e$.  One such pattern relates the solutions to the $c$ values associated with them:

\begin{theorem}
	Let $p$ be an odd prime and $m=\ord_p(g)$.  For fixed $g$ and $c$ such that $p\nmid g$ and $p\nmid c$, if we consider the function 
	$$f(x) = xg^x -c$$
	where $x\in\set{1,\dots,p^em \mid x\not\equiv 0 \mod p}$, then for any other $c'\in\set{1,\dots, p^{e-1}(p-1)}$, let $x_{i,c'}$ and $x_{j,c}$ for $1 \leq i,j \leq m$ index the $m$ solutions to
	$$x_{i,c'}g^{x_{i,c'}} \equiv c' \pmod{p^e}$$ and
	$$x_{j,c}g^{x_{j,c}} \equiv c \pmod{p^e}, \text{ respectively.}$$  
	If $c'\equiv x_{j,c} \pmod{p}$, then for each $x_{i,c'}$ there exists a unique $k$, $1\leq k \leq m$, and $x_{k,c}$ such that $x_{k,c} \equiv x_{i,c'} \pmod p$.
\end{theorem}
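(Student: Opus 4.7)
The plan is to parametrize each solution by its residue modulo $m$ and to extract from the Hensel lift a clean closed form for its residue modulo $p$. The proof of Proposition \ref{powerseries} observes that $\oneunit{g}\equiv 1\pmod p$, so the interpolated equation $x\omega(g)^{x_0}\oneunit{g}^x\equiv c\pmod{p^e}$ collapses modulo $p$ to $x\omega(g)^{x_0}\equiv c\pmod p$, and Hensel's Lemma lifts the unique solution modulo $p$ to one modulo $p^e$ without changing its residue mod $p$. Consequently, writing $x_{\ell,d}$ for the unique solution to $xg^x\equiv d\pmod{p^e}$ whose residue modulo $m$ is $\ell_0$, the first step is to record the identity
\begin{equation*}
x_{\ell,d}\equiv d\,\omega(g)^{-\ell_0}\pmod p.
\end{equation*}

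Let $j_0$ and $i_0$ denote the mod-$m$ residues of $x_{j,c}$ and $x_{i,c'}$, respectively. Applying the identity above to the hypothesis $c'\equiv x_{j,c}\pmod p$ rewrites it as $c'\equiv c\,\omega(g)^{-j_0}\pmod p$. I now look for an index $k$ with mod-$m$ residue $k_0$ satisfying $x_{k,c}\equiv x_{i,c'}\pmod p$; by the identity, this becomes $c\,\omega(g)^{-k_0}\equiv c'\,\omega(g)^{-i_0}\pmod p$, and combining with the rewritten hypothesis yields
\begin{equation*}
\omega(g)^{k_0}\equiv \omega(g)^{\,i_0+j_0}\pmod p.
\end{equation*}
Since $\omega(g)\equiv g\pmod p$ has order $m$ in $(\Z/p\Z)^\times$, this forces $k_0\equiv i_0+j_0\pmod m$ uniquely. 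Theorem \ref{anne_pe} then produces a unique $x_{k,c}$ with that mod-$m$ residue, giving both existence and uniqueness of $k$.

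The main obstacle is verifying the closed-form reduction $x_{\ell,d}\equiv d\,\omega(g)^{-\ell_0}\pmod p$; once that is recorded, the rest of the argument is a single line of algebra in $(\Z/p\Z)^\times$. I would also remark that the assignment $i_0\mapsto k_0=i_0+j_0$ is a bijection on $\Z/m\Z$, so as $x_{i,c'}$ ranges over the $m$ solutions for $c'$ the matched $x_{k,c}$ sweeps out all $m$ solutions for $c$, which is the natural strengthening of the stated conclusion.
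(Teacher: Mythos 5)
Your proposal is correct and is essentially the paper's own argument in different packaging: your closed form $x_{\ell,d}\equiv d\,\omega(g)^{-\ell_0}\pmod p$ is exactly the reduction $x\equiv c g^{-x_0}\pmod p$ the paper uses, and your choice $k_0\equiv i_0+j_0\pmod m$ is the same selection the paper makes via the Chinese Remainder Theorem in equation \eqref{c3} before cancelling the powers of $g$ modulo $p$. Your write-up does make the uniqueness of $k$ (and the resulting bijection between the solution sets for $c'$ and $c$) more explicit than the paper does, but the underlying route is the same.
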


\begin{proof}	
	We know from Theorem \ref{anne_pe} that there are $m$ solutions to $f(x)\equiv 0 \pmod{p^e}$.  We will show that for fixed $i,j$ that if $c'\equiv x_{j,c} \pmod{p}$, then for all $x_{i,c'}$ there exists a unique $x_{k,c}$ such that $x_{i,c'} \equiv x_{k,c} \pmod p$.  To begin, we have the equations
	\begin{align}
		x_{i,c'}g^{x_{i,c'}} \equiv c' \pmod{p^e} \label{c1}
	\end{align} and
	\begin{align}
		x_{j,c}g^{x_{j,c}} &\equiv c \pmod{p^e} \text{, or equivalently}\notag\\
		x_{j,c} &\equiv cg^{-x_{j,c}} \pmod{p^e} \label{c2}.
	\end{align}  Since $x_{k,c}$ ranges through the solutions to
	\begin{align}
		x_{k,c}g^{x_{k,c}} \equiv c \pmod{p^e} \label{c4}
	\end{align} where $k\in\set{1,\dots,m}$ and by Theorem \ref{anne_pe} the solutions $x_{k,c}$ are all distinct modulo $m$, we can choose $x_{k,c}$ specifically by the Chinese Remainder Theorem so that 
	\begin{align}
		x_{i,c'} &\equiv x_{k,c} -x_{j,c} \pmod m. \label{c3}
	\end{align}
	This use of the Chinese Remainder Theorem will give a unique $x_{k,c}$ for each $x_{i,c'}$ because $x_{i,c'}$ and $x_{j,c}$ are both fixed.
	 Now, we originally said that $c' \equiv x_{j,c} \pmod p$, so we have the following equivalent statements from equations \eqref{c1} and \eqref{c2}:
	\begin{align*}
		x_{i,c'}g^{x_{i,c'}} &\equiv cg^{-x_{j,c}} \pmod p.
	\end{align*}  We can substitute $c$ with equation \eqref{c4}:
	\begin{align*}
		x_{i,c'}g^{x_{i,c'}} &\equiv (x_{k,c}g^{x_{k,c}})g^{-x_{j,c}} \pmod p\\
		x_{i,c'}g^{x_{i,c'}} &\equiv x_{k,c}g^{x_{k,c}-x_{j,c}} \pmod p.
	\end{align*} Finally, using equation \eqref{c3} we simplify to:
	\begin{align*}
		x_{i,c'} &\equiv x_{k,c} \pmod p.
	\end{align*} Thus, for all $i\in\set{1,\dots,m}$, there is some unique $k$ such that $x_{i,c'} \equiv x_{k,c} \pmod p$ when $c'\equiv x_{j,c} \pmod p$.
\end{proof}

Another pattern we found involves the sum of the solutions modulo $p$ and modulo $m$:

\begin{theorem}
		Let $p$ be an odd prime and $m=\ord_p(g)$.  For fixed $g$ and $c$ such that $p\nmid g$ and $p\nmid c$, if we consider the function 
		$$f(x) = xg^x -c$$
		where $x\in\set{1,\dots,p^em \mid x\not\equiv 0 \mod p}$, then for each $c$ there are $m$ solutions, $x_1,\dots,x_m$, to $f(x)\equiv 0 \pmod{p^e}$ such that 
		$$\sum_{i=1}^{m} x_i \equiv 0 \pmod{p},$$ and for odd $m$
		$$\sum_{i=1}^{m} x_i \equiv 0 \pmod m.$$
\end{theorem}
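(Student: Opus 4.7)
The plan is to treat the two congruences separately, since they rely on different ingredients.

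For the congruence modulo $m$, the key input is already present in Theorem~\ref{anne_pe}: the $m$ solutions $x_1,\dots,x_m$ are pairwise distinct modulo $m$. Since there are exactly $m$ of them, they form a complete residue system modulo $m$, so
\[
\sum_{i=1}^{m} x_i \;\equiv\; 0+1+\cdots+(m-1) \;=\; \frac{m(m-1)}{2} \pmod{m}.
\]
When $m$ is odd, $(m-1)/2$ is an integer, so the right-hand side is $\equiv 0 \pmod m$. The parity hypothesis is exactly what makes this step work, and nothing more is needed.

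For the congruence modulo $p$, the plan is to reduce $x_i g^{x_i} \equiv c \pmod{p^e}$ to the mod-$p$ identity $x_i \equiv c\,g^{-x_i} \pmod p$, and then exploit that $g^{x_i} \bmod p$ depends only on $x_i \bmod m$. By Theorem~\ref{anne_pe}, as $i$ runs from $1$ to $m$, the residues $x_i \bmod m$ run through a complete residue system modulo $m$, which I may label $0,1,\dots,m-1$ after reindexing. Then
\[
\sum_{i=1}^{m} x_i \;\equiv\; c\sum_{j=0}^{m-1} g^{-j} \pmod{p}.
\]
The sum on the right is a geometric series. Assuming $m\ge 2$ (so $g\not\equiv 1\pmod p$), I would write $\sum_{j=0}^{m-1} g^{-j} = (g^{-m}-1)/(g^{-1}-1)$; since $g^m\equiv 1\pmod p$, the numerator vanishes mod $p$ while the denominator is a unit, giving the desired $\sum x_i\equiv 0\pmod p$.

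The only real obstacle is bookkeeping: one must justify that $g^{x_i}\bmod p$ is well-defined by $x_i\bmod m$ (immediate since $m=\ord_p g$), and that the reindexing of the $x_i$ by their residues mod $m$ is legitimate (immediate from distinctness). The case $m=1$ is degenerate and must be handled or excluded, but for $m\ge 2$ the geometric-series argument is clean and the result follows. No hard new machinery is needed beyond what was already established to prove Theorem~\ref{anne_pe}.
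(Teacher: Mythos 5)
Your proposal takes essentially the same route as the paper: the congruence modulo $m$ comes from the solutions forming a complete residue system modulo $m$ (so the sum is $m(m\pm 1)/2 \equiv 0$ for odd $m$), and the congruence modulo $p$ comes from reducing to $x_i \equiv c\,g^{-x_i} \pmod p$ and summing the geometric series, whose numerator vanishes because $g^{m}\equiv 1 \pmod p$. If anything, your version is slightly more careful than the paper's, which writes the geometric sum with denominator $1-g$ and never addresses the degenerate case $m=1$ that you correctly flag (there the single solution is $\equiv c \not\equiv 0 \pmod p$, so the mod-$p$ claim needs $m\ge 2$).
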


\begin{proof}
	We know from Theorem \ref{anne_pe} that there are $m$ solutions to $f(x)\equiv 0 \pmod{p^e}$. First, we will show that for each $c$ the solutions sum as follows: $$\sum_{i=1}^m x_i \equiv 0 \pmod{p}.$$  Since we said in Theorem \ref{anne_pe} that for each $i\in\set{1,\dots,m}$, $x_i \equiv x_0 \pmod m$ where $x_0\in\set{1,\dots,m}$, we can let $x_i\equiv i \pmod m$.  Then we know $x_i \equiv cg^{-i} \pmod{p}$.  Taking the sum of these $x_i$ gives us:
	\begin{align*}
		\sum_{i=1}^{m} x_i &\equiv \sum_{i=1}^{m} cg^{-i} \pmod{p}\\
		&\equiv \sum_{i=0}^{m-1} cg^{-i} \pmod p\\
		&\equiv c\left(\frac{1-g^{-m}}{1-g}\right) \pmod{p}\\
		&\equiv c\left(\frac{1-1}{1-g}\right) \pmod{p}\\
		&\equiv 0 \pmod{p}.
	\end{align*} 
	Thus, $\sum_{i=1}^m x_i \equiv 0 \pmod{p}$ for each $c$.
	
	Now, we will show that $\sum_{i=1}^{m} x_i \equiv 0 \pmod m$ when $m$ is odd.  Again, we have that $x_i\equiv i \pmod m$.  For each $i\in\set{1,\dots,m}$, we have
	\begin{align*}
		\sum_{i=1}^{m} x_i &\equiv \sum_{i=1}^{m} i \pmod m\\
		&\equiv \frac{m(m+1)}{2} \pmod m\\
		&\equiv 0 \pmod m.
	\end{align*}  Thus, $\sum_{i=1}^m x_i \equiv 0 \pmod{m}$.
\end{proof}

We conjecture that the same pattern of sums holds for solutions modulo $p^e$ and modulo $\ord_{p^e}(g)$, based on the evidence for all odd primes $p \leq 17$ and $1\leq e \leq 4$.

\begin{conjecture}
		Let $p$ be an odd prime, $m_p=\ord_p(g)$ and $m_{p^e}=\ord_{p^e}(g)$.  For fixed $g$ and $c$ such that $p\nmid g$ and $p\nmid c$, if we consider the function 
		$$f(x) = xg^x -c$$
		where $x\in\set{1,\dots,p^em_p \mid x\not\equiv 0 \mod p}$, then for each $c$ there are $m_p$ solutions, $x_1,\dots,x_{m_p}$, to $f(x)\equiv 0 \pmod{p^e}$ such that 
		$$\sum_{i=1}^{m_p} x_i \equiv 0 \pmod{p^e}$$ and for odd $m$
		$$\sum_{i=1}^{m_p} x_i \equiv 0 \pmod{m_{p^e}}.$$
\end{conjecture}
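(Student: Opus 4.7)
The plan is to extend the geometric-series argument from the preceding theorem from modulus $p$ to modulus $p^e$, using the $p$-adic interpolation and Hensel's Lemma apparatus of Sections~\ref{interpolation} and~\ref{hensel}. First I would invoke Theorem~\ref{anne_pe} to parameterize the $m_p$ solutions by $x_0 \in \{1,\ldots,m_p\}$: each $x_i$ is the Chinese Remainder Theorem combination of $x_i \equiv x_0 \pmod{m_p}$ and $x_i \equiv \hat{x}(x_0) \pmod{p^e}$, where $\hat{x}(x_0) \in \Zp$ is the unique Hensel root of $f_{x_0}(x) = x\omega(g)^{x_0}\oneunit{g}^x - c$ supplied by Proposition~\ref{powerseries}. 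Because the sum modulo $p^e$ sees only the $\hat{x}(x_0)$ components, the task reduces to proving $\sum_{x_0=1}^{m_p}\hat{x}(x_0) \equiv 0 \pmod{p^e}$.

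Next I would develop a $p$-adic series for $\hat{x}(x_0)$ by iterating its own fixed-point equation $\hat{x}(x_0) = c\,\omega(g)^{-x_0}\oneunit{g}^{-\hat{x}(x_0)}$, starting from the base approximation $\hat{x}(x_0) \equiv c\,\omega(g)^{-x_0}\pmod{p}$ and substituting back using $\oneunit{g}^{-y} = \exp(-y\log\oneunit{g})$ with $\log\oneunit{g}\in p\Zp$. After enough rounds of Hensel refinement one should obtain a congruence of the shape $\hat{x}(x_0) \equiv \sum_{k=0}^{e-1} p^k P_k(\omega(g)^{-x_0})\pmod{p^e}$ for certain polynomials $P_k \in \Zp[T]$ built from $c$ and $\log\oneunit{g}$. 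Summing over $x_0$ and using orthogonality of the $m_p$-th roots of unity---$\sum_{x_0=1}^{m_p}\omega(g)^{-jx_0}$ equals $m_p$ when $m_p\mid j$ and $0$ otherwise---eliminates every non-resonant term, just as the proof modulo $p$ killed the geometric sum $\sum_i g^{-i}$. The claim modulo $m_{p^e}$ would then split along the factorization $m_{p^e}=m_p\cdot p^{s}$ with $0\le s\le e-1$: the $m_p$-part is immediate from $x_i\equiv i\pmod{m_p}$ together with $\sum_{i=1}^{m_p} i \equiv 0 \pmod{m_p}$ for odd $m_p$, exactly as in the preceding theorem, while the $p^s$-part is inherited from the already-established congruence modulo $p^e$ since $s\le e-1$.

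The main obstacle will be verifying that the surviving resonant contributions---those indexed by $j$ with $m_p\mid j$---carry enough $p$-adic valuation to vanish modulo $p^e$. Heuristically, the first such term arises at Hensel-depth $k=m_p-1$ and contributes a factor of $(\log\oneunit{g})^{m_p-1}\in p^{m_p-1}\Zp$, so one must establish a uniform bound of the shape $v_p(P_k)+k\ge e$ at every resonant layer. If this accounting proves too intricate, a natural alternative is induction on $e$: assuming $\sum_i x_i^{(e)}\equiv 0\pmod{p^e}$, lift each solution to $x_i^{(e+1)}=x_i^{(e)}+p^{e}t_i$ via Hensel, where $t_i\equiv -p^{-e}f(x_i^{(e)})/f'(x_i^{(e)})\pmod{p}$, and reduce the inductive step to showing $\sum_i t_i\equiv -p^{-e}\sum_i x_i^{(e)}\pmod{p}$---a congruence that again unwinds to a geometric identity modulo $p$ through the explicit form of $f'(x_i^{(e)})$ from Lemma~\ref{lemma}.
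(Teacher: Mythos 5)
First, a point of comparison you could not have known: the paper contains no proof of this statement at all. The authors state it only as a conjecture, supported by machine computation for odd primes $p\leq 17$ and $1\leq e\leq 4$, so there is no argument of theirs that your proposal could be matching or diverging from; you are attempting something the authors themselves left open.

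Your opening reduction is sound: modulo $p^e$ each solution is the Hensel root $\hat{x}(x_0)$ of $f_{x_0}$ from Proposition \ref{powerseries}, so the claim becomes $\sum_{x_0=1}^{m_p}\hat{x}(x_0)\equiv 0 \pmod{p^e}$, and orthogonality of the powers of $\omega(g)$ does kill every non-resonant term. But the ``main obstacle'' you flag --- a uniform valuation bound $v_p(P_k)+k\geq e$ at the resonant layers --- is not an intricacy to be worked out; it is false, and with it the literal statement. Solving $\hat{x}=c\,\omega(g)^{-x_0}\exp(-\hat{x}\log\oneunit{g})$ to second order gives $\hat{x}(x_0)\equiv c\,\omega(g)^{-x_0}-c^2\omega(g)^{-2x_0}\log\oneunit{g} \pmod{p^2}$; when $m_p=2$ the second term is resonant, so $\sum_{x_0}\hat{x}(x_0)\equiv -2c^2\log\oneunit{g} \pmod{p^2}$, which generically has valuation exactly $1$. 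Concretely, take $p=3$, $g=2$, $e=2$, $c=1$: the two solutions in $\set{1,\dots,18}$ are $x=4$ and $x=11$, and $4+11=15\equiv 6\not\equiv 0 \pmod 9$ (though $\equiv 0 \pmod 3$, as the proved mod-$p$ theorem guarantees). In general the leading resonant contribution is of the shape $m_p\,\frac{(-m_p)^{m_p-1}}{m_p!}\,c^{m_p}(\log\oneunit{g})^{m_p-1}$, so this method can only yield vanishing modulo roughly $p^{\,m_p-1}$, not modulo $p^e$ for all $e$; your induction-on-$e$ fallback fails for the same reason, since one cannot lift a congruence that already breaks at the first level where $e$ exceeds that valuation. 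The productive outcome of your calculation is therefore not a completion of the proof but a sharpening of the conjecture: either bound $e$ in terms of $m_p$, or reformulate the sum over the $m_{p^e}$ solutions in $\set{1,\dots,p^em_{p^e}}$ (in the example above these are $4,11,22,29,40,47$, whose sum $153\equiv 0\pmod 9$), where your resonance analysis, run with $\omega$ replaced by the order-$m_{p^e}$ character data, has a real chance of succeeding.
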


We also looked some patterns for fixed $x$ and variable $c$.

\begin{theorem}
	Let $p$ be an odd prime.  For a fixed $x\in\set{1,\dots,p^e}$ and for $p\nmid g$ and $c\in\set{1,\dots, p^{e-1}(p-1)}$, if we consider $xg^x \equiv c \pmod{p^e}$ and let $x(g^{-1})^x \equiv c' \pmod{p^e}$, then $c\cdot c' \equiv x^2 \pmod{p^e}$.  Furthermore, if we let $x(-g)^x \equiv c'' \pmod{p^e}$ then $c'' \equiv (-1)^xc \pmod{p^e}$.
\end{theorem}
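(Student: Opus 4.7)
The plan is to prove both identities by direct manipulation of the defining congruences, using only the fact that the hypothesis $p\nmid g$ makes $g$ a unit modulo $p^e$, so that $g^{-1}$ (and hence $g^{-x}$) is well-defined. No interpolation or Hensel-lifting machinery is required here; unlike the earlier results in the paper, these are purely algebraic identities in $\Z/p^e\Z$.

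For the first identity, the natural move is to multiply the two defining congruences
$$xg^x \equiv c \pmod{p^e} \quad\text{and}\quad x(g^{-1})^x \equiv c' \pmod{p^e}$$
together. Writing $(g^{-1})^x = g^{-x}$, the left-hand side collapses to $x^2 g^x g^{-x} = x^2$ in $(\Z/p^e\Z)^\times$, while the right-hand side is $c \cdot c'$. This immediately yields $c\cdot c' \equiv x^2 \pmod{p^e}$.

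For the second identity, I would factor $(-g)^x = (-1)^x g^x$ inside the congruence $x(-g)^x \equiv c''$. Here $x$ is a specific integer drawn from $\{1,\dots,p^e\}$, so the symbol $(-1)^x$ is unambiguous (it depends on the parity of the integer $x$, not merely its residue class modulo $p^e$). Pulling out the sign gives
$$c'' \equiv x(-1)^x g^x \equiv (-1)^x\bigl(xg^x\bigr) \equiv (-1)^x c \pmod{p^e}.$$

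Since both assertions reduce to one-line computations, there is no real obstacle. The only subtleties worth flagging are (i) that $g^{-1}$ and hence $g^{-x}$ exist modulo $p^e$ precisely because $\gcd(g,p^e)=1$ follows from $p\nmid g$, and (ii) that the sign $(-1)^x$ must be interpreted with $x$ regarded as a fixed integer in $\{1,\dots,p^e\}$, which is exactly how $x$ is quantified in the statement.
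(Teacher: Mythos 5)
Your proposal is correct and matches the paper's own proof: both parts are handled by exactly the same direct manipulations (multiplying the two defining congruences so that $g^x g^{-x}$ cancels, and factoring $(-g)^x = (-1)^x g^x$). Your added remarks about invertibility of $g$ modulo $p^e$ and the parity interpretation of $(-1)^x$ are sensible clarifications but do not change the argument.
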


\begin{proof}
	First, we will show that $c\cdot c' \equiv x^2 \pmod{p^e}$.  Since $c\equiv xg^x \pmod{p^e}$ and $c'\equiv x(g^{-1})^x \pmod{p^e}$, we can say that
	\begin{align*}
		c\cdot c' &\equiv (xg^x)(x(g^{-1})^x) \pmod{p^e}\\
		&\equiv x^2(g^x)(g^{-x}) \pmod{p^e}\\
		&\equiv x^2 \pmod{p^e}.
	\end{align*}Hence, $c\cdot c' \equiv x^2 \pmod{p^e}$.  Now, we need to show that $c''\equiv(-1)^xc\pmod{p^e}$. We have
	\begin{align*}
		c'' &\equiv x(-g)^x \pmod{p^e}\\
		&\equiv x(-1)^xg^x \pmod{p^e}\\
		&\equiv (-1)^xxg^x \pmod{p^e}\\
		&\equiv (-1)^xc \pmod{p^e}.
	\end{align*} Thus $c'' \equiv (-1)^xc\pmod{p^e}$.
\end{proof}

\begin{proposition}
Let $p$ be an odd prime and $g$ be a generator modulo $p^e$. If $c= \frac{p^e+p^{e-1}}{2}$,  then  $x= \frac{p^e-p^{e-1}}{2}$ is one of the solutions to 
	 $$xg^x \equiv c \pmod{p^e}. $$
\end{proposition}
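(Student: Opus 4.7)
The plan is to exploit the fact that $x = \frac{p^e - p^{e-1}}{2} = \frac{p^{e-1}(p-1)}{2}$ is exactly half of $\phi(p^e)$, the order of $g$ modulo $p^e$. Since $p$ is odd, $p-1$ and $p+1$ are both even, so $x$ and $c$ are genuine integers.

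First I would observe the arithmetic identity
\begin{equation*}
x + c = \frac{p^{e-1}(p-1)}{2} + \frac{p^{e-1}(p+1)}{2} = \frac{p^{e-1}\cdot 2p}{2} = p^e,
\end{equation*}
so $c \equiv -x \pmod{p^e}$. Thus the congruence $xg^x \equiv c \pmod{p^e}$ reduces to showing $xg^x \equiv -x \pmod{p^e}$, and since $\gcd(x,p) = 1$ is not needed here (we only need the congruence to hold, not cancellation), it suffices to establish the key identity $g^x \equiv -1 \pmod{p^e}$.

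For this step, note that because $g$ is a generator modulo $p^e$, its multiplicative order is $\phi(p^e) = p^{e-1}(p-1) = 2x$. Therefore $(g^x)^2 \equiv 1 \pmod{p^e}$, so $g^x$ is a square root of $1$ in $(\Z/p^e\Z)^\times$. Since $p$ is an odd prime, the group $(\Z/p^e\Z)^\times$ is cyclic and hence has exactly two square roots of $1$, namely $\pm 1$. The value $g^x$ cannot equal $1$ because $x = \phi(p^e)/2$ is strictly less than the order of $g$, so $g^x \equiv -1 \pmod{p^e}$ is forced.

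Combining the two ingredients yields
\begin{equation*}
xg^x \equiv x \cdot (-1) \equiv -x \equiv c \pmod{p^e},
\end{equation*}
which is what we wanted. There is no real obstacle here; the only subtle point is the justification that $(\Z/p^e\Z)^\times$ has only two square roots of unity, which follows from cyclicity (valid precisely because $p$ is odd), and the verification that $x < \phi(p^e)$ so that $g^x \not\equiv 1$.
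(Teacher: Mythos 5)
Your proof is correct and takes essentially the same approach as the paper: both arguments hinge on the key fact that $g^{(p^e-p^{e-1})/2} \equiv -1 \pmod{p^e}$ because $g$ is a generator modulo $p^e$. Your write-up is in fact a bit tidier — the observation $x + c = p^e$ replaces the paper's explicit expansion of $xg^x - c$, and you justify $g^x \equiv -1$ more completely (exactly two square roots of unity in the cyclic group, and $g^x \not\equiv 1$ since $x < \ord_{p^e}(g)$) than the paper's one-line remark that $(p^e-1)^2 \equiv 1 \pmod{p^e}$.
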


 \begin{proof}
By hypothesis, we see that
	\begin{align*}
		xg^x- c & = \frac{p^e-p^{e-1}}{2}g^\frac{p^e-p^{e-1}}{2} - \frac{p^e+p^{e-1}}{2}\\
		&\equiv \frac{p^e-p^{e-1}}{2}(p^e-1)-\frac{p^e+p^{e-1}}{2} \pmod{p^e} \\
		&= \frac{p^{2e}-p^{2e-1}-p^e+p^{e-1}-p^e-p^{e-1}}{2} \pmod{p^e}\\
		&=\frac{p^{2e}-p^{2e-1}-2p^e}{2} \pmod{p^e}\\
		&= \frac{p^e(p^e-p^{e-1}-2)}{2} \pmod{p^e}\\
		&= \frac{p^e(p^{e-1}(p-1)-2))}{2} \pmod{p^e}\\
		&\equiv 0 \pmod{p^e}.
	\end{align*}	
Note that if $g$ is an generator modulo $p^e$,  $\ord_{p^e}(g)=p^e-p^{e-1}$, thus $g^{\frac{p^e-p^{e-1}}{2}}\equiv p^e-1 \pmod {p^e}$ because $(p^e-1)^2\equiv 1\pmod{p^e}$.
\end{proof}

\begin{proposition}
Let $n\geq 2$ and  $n \in \Z^+ $. If $\gcd (p, n) = 1$ and $p$ is an odd prime, then 
$$  
  	\ord_{p^e}(p-1)^n =\left\{
     \begin{array}{ll}
	     p^{e-1} & \quad n \text{ is even} \\
 	      2p^{e-1} & \quad n \text{ is odd.}
     \end{array}
    \right.
     $$
\end{proposition}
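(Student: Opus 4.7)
The plan is to factor $(p-1)^n = (-1)^n(1-p)^n$ in $(\Z/p^e\Z)^\times$ and handle the two factors separately: $-1$ contributes an element of order $2$, while $1-p$ is a principal unit whose order turns out to be a power of $p$ and hence coprime to $2$.

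The key intermediate step is to establish that $\ord_{p^e}(1-p) = p^{e-1}$. Since $1-p \in 1+p\Zp$, I would use the $p$-adic logarithm (already in play earlier in the paper): $(1-p)^k \equiv 1 \pmod{p^e}$ is equivalent to $k\log(1-p) \in p^e\Zp$, and since $\log(1-p) = -p - p^2/2 - \cdots$ has $p$-adic valuation exactly $1$, this forces $v_p(k) \geq e-1$, so the order is $p^{e-1}$. A purely elementary alternative is to prove by induction on $j$ that $(1-p)^{p^j} \equiv 1 - p^{j+1} \pmod{p^{j+2}}$, using the binomial theorem together with $p \mid \binom{p}{i}$ for $1 \leq i \leq p-1$ (valid since $p$ is odd).

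Because $\gcd(n,p)=1$ implies $\gcd(n,p^{e-1})=1$, the element $a := (1-p)^n$ also has order $p^{e-1}$ in $(\Z/p^e\Z)^\times$. If $n$ is even, then $(-1)^n = 1$, so $(p-1)^n = a$ has order $p^{e-1}$, as claimed. If $n$ is odd, then $(p-1)^n = -a$, so I must compute $\ord_{p^e}(-a)$ for $a \in 1+p\Zp$ with $\ord_{p^e}(a) = p^{e-1}$. Writing $(-a)^k = (-1)^k a^k$ and observing that $a^k \in 1+p\Zp$ cannot be $\equiv -1 \pmod p$ for $p$ odd, the condition $(-a)^k \equiv 1 \pmod{p^e}$ decouples into $(-1)^k = 1$ and $a^k \equiv 1 \pmod{p^e}$, i.e., $2 \mid k$ and $p^{e-1} \mid k$. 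Since $\gcd(2,p^{e-1})=1$ for odd $p$, the smallest such $k$ is $2p^{e-1}$.

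The main obstacle is really just the computation $\ord_{p^e}(1-p) = p^{e-1}$; once this standard fact about principal units is in hand, the rest is a short decoupling argument using that $-1$ and the subgroup $1+p\Zp$ intersect trivially when $p$ is odd.
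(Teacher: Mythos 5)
Your proposal is correct, but it follows a genuinely different route from the paper's proof. The paper argues by induction on $e$, treating even and odd $n$ through parallel binomial expansions: the base case $e=1$ expands $(p-1)^n$ and $(p-1)^{2n}$ directly, and the inductive step takes the least $x$ with $(p-1)^{xn}\equiv 1 \pmod{p^{e+1}}$, writes $(p-1)^{xn}-1$ in the form $px(-n+dp)$, and concludes $p^e \mid x$ from $p\nmid(-n+dp)$. You instead factor $(p-1)^n=(-1)^n(1-p)^n$ and reduce everything to the single key fact $\ord_{p^e}(1-p)=p^{e-1}$, then finish with $\ord(a^n)=\ord(a)/\gcd(\ord(a),n)$ and the sign-decoupling observation that no element of $1+p\Zp$ is $\equiv -1 \pmod p$ for odd $p$; all of this checks out, including the odd case giving $\operatorname{lcm}(2,p^{e-1})=2p^{e-1}$. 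One small caveat: the equivalence ``$(1-p)^k\equiv 1 \pmod{p^e}$ iff $k\log(1-p)\in p^e\Zp$'' silently uses that $\log$ is a filtration-preserving bijection between $1+p\Zp$ and $p\Zp$ for odd $p$, so either cite that or lean on your elementary alternative $(1-p)^{p^j}\equiv 1-p^{j+1} \pmod{p^{j+2}}$, which suffices on its own. Comparing the two: your version isolates exactly where $\gcd(n,p)=1$ enters (the order formula for powers) and where oddness of $p$ enters (the sign decoupling and the valuation of $\log(1-p)$), replaces the paper's two parallel computations and induction on $e$ by one computation of the order of a principal unit, and meshes with the $p$-adic $\exp$/$\log$ machinery the paper already uses in Sections \ref{interpolation} and \ref{hensel}; the paper's proof is self-contained binomial arithmetic, and its tersely stated factorization $px(-n+dp)$ is, in valuation language, precisely your observation that the binomial terms of index $j\geq 2$ carry strictly higher $p$-adic valuation.
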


\begin{proof}
We will prove this by inducting on $e$. \\[.1in]For our base case, let $e=1$:\\[.1in]
When $n$ is even:
	\begin{align*}
		(p-1)^{n} &=1-np +  \frac{np(np-1)}{2}p^2 +\dots+ p^{np}\\
		&= 1-mp \\ 
		&\equiv 1 \pmod{p},
	\end{align*}  

where $m \in \mathbb{Z}$.\\[.15in]
When $n$ is odd: 
	\begin{align*}
		(p-1)^{2n} &=1+2np+\frac{2np(2np-1)}{2}p^2 +\dots+p^{2np}\\
		&=1+ap \\
		&\equiv 1 \pmod{p}, \text{ and} 
	\end{align*}
	\begin{align*}
		(p-1)^{n} &=-1+np - \frac{np(np-1)}{2}p^2 + \dots+ p^{np}\\
		&= -1 + bp \\
		&\equiv p-1 \pmod{p} \\ 
		&\not\equiv 1 \pmod{p},
	\end{align*}
	
where $a,b \in \mathbb{Z}$.\\[.1in]
So our base case holds:
	$$   
  	\ord_{p}(p-1)^n =\left\{
     \begin{array}{ll}
  	   1 & \quad n \text{ is even} \\
  	     2 & \quad n \text{ is odd.}
     \end{array}
    \right.
     $$
For our inductive hypothesis, we assume the following:
	$$   
  	\ord_{p^e}(p-1)^n =\left\{
     \begin{array}{ll}
   	  p^{e-1} & \quad n \text{ is even} \\
   	    2p^{e-1} & \quad n \text{ is odd.}
     \end{array}
    \right.
     $$
Now, in our inductive step we need to show:
	$$
	\ord_{p^{e+1}}(p-1)^n =\left\{
	\begin{array}{ll}
		p^e & \quad n \text{ is even} \\
		2p^e & \quad n \text{ is odd.}
	\end{array}
	\right.
	$$
	
When $n$ is even:
	\begin{align*}
		(p-1)^{np^e} &=1-np^ep +  \frac{np^e(np^e-1)}{2}p^2 +\dots+ p^{np^e}\\
		&= 1-kp^{e+1} \\ 
		&\equiv 1 \pmod{p^{e+1}},
	\end{align*}  

where $k \in \mathbb{Z}$.\\[.1in]
When $n$ is even, let $x$ be the least integer such that the following equivalent equations hold:
	\begin{eqnarray*}
		(p-1)^{xn} &\equiv 1\pmod{p^{e+1}}.\\
		1-xnp +  \frac{xn(xn-1)}{2}p^2 + \dots + p^{xn} &\equiv 1 \pmod{p^{e+1}}.\\
		-xnp +  \frac{xn(xn-1)}{2}p^2 + \dots + p^{xn} &\equiv 0 \pmod{p^{e+1}}.\\
		    							 px(-n+dp) & \equiv 0   \pmod{p^{e+1}}.
	\end{eqnarray*}
where $d \in \mathbb{Z}$. Since $\gcd(p,n)=1$, then $p \nmid -n+dp$. Therefore $p^{e} \mid x$, hence the least $x=p^{e}=\ord _{p^{e+1}}(p-1)^n$. The proof for showing $\ord_{p^e+1}(p-1)^n =2p^e$ when $n$ is odd is a parallel to the case when $n$ is even.\\[.1in]
Therefore $p^e$ and $2p^e$ are the least integers such that
	$$
	\left\{
	\begin{array}{ll}
		(p-1)^{np^e} \equiv 1 \pmod{p^{e+1}}  & \quad n \text{ is even} \\
		(p-1)^{2np^e} \equiv 1 \pmod{p^{e+1}} & \quad n \text{ is odd}.
	\end{array}
	\right.
	$$

\end{proof}

\section{Conclusions and Future Work}

Following Holden and Robinson \cite{holden_robinson}, we counted solutions to the discrete Lambert problem modulo powers of a prime $p$ and we found very similar results regarding the number of solutions for $x$ in $\set{1,\dots,p^e(p-1)}$ and  $\set{1,...,p^em}$ where $m$ is the multiplicative order of $g$ modulo $p$. For a given $g$ the value $m$ is very important in understanding the number of solutions to the DWP. In addition, we found how solutions modulo $p$ relate to $c$, as well as some special properties between the sum of the solutions and $p^e$. We also found that when $g$ is a generator modulo $p^e$ there is a special $(x,c)$ that satisfies the DWP.


According to Chen and Lotts, when $g=(p-1)$, the solutions to the DWP modulo $p$ are very predictable (see Section 3.4 \cite {Chen_Lotts}). Therefore it is not an good choice to use in a cryptosystem. However, they did not consider the solutions to the DWP modulo $p^e$. Due to the change in the multiplicative order of $p-1$ modulo $p^e$, the patterns in the solutions to the DWP become erratic and cannot be foreseen as far as we can tell.

We should mention that since this work was completed Dara Zirlin \cite{zirlin} has extended our research to the case where $p=2$ and has also counted the number of fixed points and two-cycles of the discrete Lambert map for all primes $p$. In particular, she has counted the number of solutions $x$ to $xg^x \equiv x \pmod {p^e}$ and the number of solutions $(h,a)$ to the system of congruences:
\begin{center}
	$hg^h \equiv a \pmod{p^e}$  and $ag^a \equiv h \pmod{p^e}$\\
\end{center}
where $x$, $a$ and $h$ range through the appropriate sets of integers, $g$ is fixed and $p$ is any prime.
\section{Acknowledgements}
We would like to thank Professor Joshua Holden and Professor Margaret Robinson for their guidance and support throughout our project during the summer of 2014.

\begin{bibdiv}
\begin{biblist}
\bib{Chen_Lotts}{article}{
	title = {Structure and Randomness of the Discrete Lambert Map}, 
	volume = {13},

	number = {1},
	journal = {Rose-Hulman Undergraduate Mathematics Journal},
	author = {J. Chen},
        author = {M. Lotts},
	year = {Spring 2012},
	pages = {64--99}
}
\bib{corless}{article}{
	title = {On the Lambert W function},
	journal = {Advances in Computational Mathematics},
	volume = {5},
	author={R. M. Corless},
	author={G. H. Gonnet},
	author = {D. E. G. Hare},
	author = {D. J. Jeffrey},
	author = {D. E. Knuth},
	year = {1996},
	pages = {329-359}	
}

\bib{holden_robinson}{article}{
	title = {Counting Fixed Points, Two-Cycles, And Collision of the Discrete Exponential Function Using p-adic Methods},

	journal = {Journal of the Australian Mathematical Society},
	author = {J. Holden},
        author = {M. Robinson},
	year = {2010},
	
}
\bib{gouvea}{book}{
	edition = {2},
	title = {p-adic Numbers: An Introduction},
	isbn = {3540629114} 
	publisher = {Springer},
	author = {F. Q. Gouvea},
	year = {1997-07}
}
\bib{katok}{book}{
	edition = {1},
	title = {p-adic Analysis Compared with Real},
	series={Student Mathematical Library},
	isbn = {9780821842201},
	publisher = {American Mathematical Society},
	author = {S. Katok},
	year = {2007}
}
\bib{koblitz}{book}{
	edition = {2},
	title = {$p$-adic Numbers, $p$-adic Analysis, and
          {Zeta-Functions}},
        series = {Graduate Texts in Mathematics},
	isbn = {0387960171},
	publisher = {Springer},
	author = {N. Koblitz},
	year = {1984-07}
}
\bib{zirlin}{article}{
	title = {Problems motivated by Cryptology:
 Counting fixed points and two-cycles of the discrete Lambert Map},

	journal = {Undergraduate thesis presented to the Mathematics and Statistics Department at Mount Holyoke College },
	author = {D. Zirlin},
       
	year = {2015},
	
}

\end{biblist}
\end{bibdiv}

\end{document}